\newtheorem{theorem}{Theorem}
\newtheorem{corollary}[theorem]{Corollary}
\newtheorem{conj}[theorem]{Conjecture}
\newtheorem{lemma}[theorem]{Lemma}
\theoremstyle{definition}
\theoremstyle{remark}
\numberwithin{equation}{section}
\numberwithin{theorem}{section}
\numberwithin{defn}{section}
\begin{document}
\title[Nahm sum identities for Cartan matrices of type $D_k$]
 {Nahm sum identities for Cartan matrices of type $D_k$}

\author{Liuquan Wang and Shangwen Wang}

\address[L.\ Wang]{School of Mathematics and Statistics, Wuhan University, Wuhan 430072, Hubei, People's Republic of China}

\email{wanglq@whu.edu.cn;mathlqwang@163.com}

\address[S.\ Wang]{School of Mathematics and Statistics, Wuhan University, Wuhan 430072, Hubei, People's Republic of China}
\email{whuwsw@whu.edu.cn}

\subjclass[2010]{05A30, 11P84, 33D15, 33D60, 11F03}

\keywords{Nahm sums; Rogers--Ramanujan type identities; Bailey pairs; conformal field theory}

\begin{abstract}
Around 2007, Warnaar proved four identities related to Nahm sums associated with twice the inverse of the Cartan matrix of type $D_k$. Three of these had been conjectured by Flohr, Grabow, and Koehn, while special cases of two of the identities were first conjectured in 1993 by Kedem, Klassen, McCoy, and Melzer. Warnaar's proof relies on a multi-sum identity from Andrews' proof of the Andrews–Gordon identities. We give a new proof of all four identities using the theory of Bailey pairs. Furthermore, we establish a parametric generalization of two of the identities and provide two distinct proofs of this generalization.
\end{abstract}

\maketitle

\section{Introduction}\label{sec-intro}
As a famous and important topic linking the theory of $q$-series and modular forms, Nahm's problem \cite{Nahm1994,Nahm07} asks for finding all positive definite matrix $A\in \mathbb{Q}^{k\times k}$, $k$-dimensional column vector $B\in \mathbb{Q}^k$ and rational scalar $C$ such that the Nahm sum
    \begin{align}\label{eq-Nahm}
    f_{A,B,C}(q):=\sum_{n=(n_1,\dots,n_k)^\mathrm{T} \in \mathbb{N}^k}\frac{q^{\frac{1}{2}n^\mathrm{T}An+n^\mathrm{T}B+C}}{(q;q)_{n_1} \cdots (q;q)_{n_k}}
\end{align}
is modular. Here and throughout, we adopt the $q$-series notation: for  $n\in \mathbb{N}\cup\{\infty\}$,
\begin{align}
    (a;q)_n:=\prod\limits_{k=0}^{n-1}(1-aq^k)  \quad \text{and} \quad
    (a_1,a_2,\dots,a_m;q)_n:=\prod\limits_{k=1}^m(a_k;q)_n.
\end{align}
We call $f_{A,B,C}(q)$ the rank $k$ Nahm sum associated with $(A,B,C)$. Nahm's motivation comes from physics as such modular Nahm sums are usually characters of some 2-dimensional rational conformal field theories.

From the $q$-series perspective, Nahm's problem dates back to the famous Rogers--Ramanujan identities \cite{Rogers}:
\begin{align}
    \sum_{n=0}^\infty \frac{q^{n^2+\lambda n}}{(q;q)_n}=\frac{1}{(q^{\lambda+1},q^{4-\lambda};q^5)_\infty}, \quad \lambda=0,1.
\end{align}
This implies that for $k=1$, $f_{A,B,C}(q)$ is modular for $(A,B,C)=(2,0,1/60)$ and $(2,1,-11/60)$. These two identities have been generalized to the Andrews--Gordon identities \cite{Andrews1974} involving arbitrary odd moduli greater than 3: for integers $k,s$ such that $k\geq 1$ and $1\leq s \leq k+1$,
\begin{align}
\sum_{n_1,\dots,n_{k}\geq 0} \frac{q^{N_1^2+\cdots+N_{k}^2+N_s+\cdots +N_{k}}}{(q;q)_{n_1}(q;q)_{n_2}\cdots (q;q)_{n_{k}}}  =\frac{(q^s,q^{2k+3-s},q^{2k+3};q^{2k+3})_\infty}{(q;q)_\infty} \label{AG}
\end{align}
where
\begin{align}\label{eq-N-sum}
N_j=n_j+\cdots+n_{k} \quad (j\leq k), \quad \text{and} \quad N_{k+1}=0.
\end{align}
The Nahm sums involved here are associated with the matrix $2G_{k}$ where
\begin{align}\label{eq-matrix-G}
G_{k}:=(\min (i,j))_{1\leq i,j\leq k}.
\end{align}

In 2007, Zagier \cite{Zagier} solved Nahm's problem in the rank one case by proving that there are exactly seven modular rank one Nahm sums. He \cite[Tables 2 and 3]{Zagier} also provided some candidates for modular Nahm sums associated with 11 and 12 different matrices in the cases of rank two and three, respectively. After the works of Zagier \cite{Zagier}, Vlasenko--Zwegers \cite{VZ}, Wang \cite{Wang-rank2,Wang-rank3} and Cao--Rosengren--Wang \cite{CRW}, the modularity of these candidates have all been confirmed.

Zagier's lists \cite[Tables 2 and 3]{Zagier} do not exhaust all modular rank two and rank three Nahm sums. Some more modular Nahm sums in the rank two case were found by Vlasenko and Zwegers \cite{VZ}. Recently, Cao and Wang \cite{Cao-Wang2024,Cao-Wang2025} discovered some new modular Nahm sums in the rank three and rank four cases.

Inspired by the Andrews--Gordon identities \eqref{AG}, it is natural to ask whether we can find families of modular Nahm sums involving infinitely many ranks. A closely related identity can be found from Sembridge's 1990 work \cite[Corollary 1.5(b)]{Stembridge}:
\begin{align}
    \sum_{n_1,n_2,\dots,n_k\geq 0} \frac{q^{\frac{1}{2}(N_1^2+N_2^2+\cdots+N_k^2)}}{(q;q)_{n_1}(q;q)_{n_2}\cdots (q;q)_{n_k}}=\frac{(-q^{1/2};q)_\infty(q^{(k+1)/2},q^{(k+3)/2},q^{k+2};q^{k+2})_\infty}{(q;q)_\infty}.
\end{align}
The Nahm sum here is associated with the matrix $G_k$.
The even rank case $k=2m-2$ of this identity was conjectured by Melzer \cite{Melzer}, and Bressoud, Ismail and Stanton \cite[Theorem 5.1]{Bressoud2000} provided a different proof.  The odd rank case $k=2m-1$ was also proved by Warnaar \cite[Theorem 4.5]{Warnaar-2003}.

It should be noted that $G_k^{-1}$ is the Cartan matrix $\mathcal{C}(T_k)=(t_{ij})_{k\times k}$ of the tadpole Dynkin diagram with rank $k$ where
\begin{equation}\label{eq-matrix-Tk}
\begin{split}
   & t_{kk}=1, \quad t_{ii}=2, \quad 1\leq i \leq k-1, \quad t_{ij}=-1 \quad (|i-j|=1), \quad \text{and} \\
   & t_{ii}=0 \quad \text{otherwise}.
\end{split}
\end{equation}
Around 2016, Calinescu, Milas and Penn \cite[Conjecture 1]{CMP} conjectured that for any $k\geq 2$, $f_{\mathcal{C}(T_k),0,C}(q)$  is always modular for some scalar $C$ depending on $k$. This agrees with Zagier's duality expectation \cite[p.~50, (f)]{Zagier} which asserts that if the rank $k$ Nahm sum $f_{A,B,C}(q)$ is modular, then it is very likely that its dual Nahm sum $f_{A^\star,B^\star,C^\star}(q)$ is also modular where
\begin{align}\label{eq-dual}
    (A^\star, B^\star, C^\star)=(A^{-1},A^{-1}B,\frac{1}{2}B^\mathrm{T} A^{-1}B-\frac{k}{24}-C).
\end{align}
The case $k=2$ of the Calinescu--Milas--Penn conjecture has been proved in \cite{CMP}. The case $k=3$ was proved by Milas and Wang \cite{MW24} in 2024, and the cases $k=4,5$ were recently proved by Shi and Wang \cite{Shi-Wang}. However, the general case remains open and seems very complicated.

As for Cartan matrices of other type, around 1993, Kedem, Klassen, McCoy and Melzer \cite{Kedem} considered Nahm sums associated with twice the inverse Cartan matrices of the Lie algebras $A_n$, $D_n$, $E_6$, $E_7$ and $E_8$. In particular, they \cite[(2.9)]{Kedem}  discovered without proof that for $r=0,1$,
\begin{align}\label{eq-KKMM}
\sum_{\substack{n=(n_1,\dots,n_k)\in \mathbb{N}^k \\ n_{k-1}\equiv n_k +r \!\!\! \pmod{2}}}  \frac{q^{n^\mathrm{T}\mathcal{C}(D_k)^{-1}n}}{(q;q)_{n_1}\cdots (q;q)_{n_k}}=\frac{q^{\frac{1}{24}}}{\eta(\tau)}\sum_{n \in \mathbb{Z}} q^{k(n+\frac{1}{2}r)^2}.
\end{align}
Here $q=e^{2\pi i\tau}$ ($\mathrm{Im}~\tau>0$) and the Dedekind eta function is defined as
\begin{align}
\eta(\tau):=q^{\frac{1}{24}}(q;q)_\infty.
\end{align}
Recall the Cartan matrix $\mathcal{C}(D_k)=(d_{ij})_{k\times k}$ for the Lie algebra $D_k$ ($k\geq 3$):
\begin{equation}
\begin{split}
   & d_{ii}=2, \quad 1\leq i \leq k, \quad d_{i,j}=-1, \quad |i-j|=1,~~ 1\leq i,j\leq k-1, \\
   & d_{k-3,k}=d_{k,k-3}=-1, \quad \text{and} \quad d_{i,j}=0 \quad \text{otherwise}.
\end{split}
\end{equation}
The Nahm sum in \eqref{eq-KKMM} is associated with the matrix $A=(a_{ij})_{k\times k}=2\mathcal{C}(D_k)^{-1}$  with elements
\begin{equation}\label{eq-2D-inverse}
\begin{split}
&a_{k,k-1}=a_{k-1,k}=\frac{1}{2}k-1, \quad a_{k-1,k-1}=a_{k,k}=\frac{1}{2}k, \\
&a_{i,k-1}=a_{i,k}=a_{k-1,i}=a_{k,i}=i, \quad 1\leq i\leq k-2, \\
&a_{ij}=2\min(i,j), \quad 1\leq i,j\leq k-2.
\end{split}
\end{equation}
Here the matrix can be understood as $A=\mathcal{C}(A_1)\otimes \mathcal{C}(D_k)^{-1}$ as $\mathcal{C}(A_1)=2$. In fact,  many Nahm sums relevant to rational conformal field theory are associated with the matrix $A=G\otimes G'^{-1}$ where $G$ and $G'$ are ADET-type Cartan matrices. The matrix $2G_k$ mentioned above can also be understood as $\mathcal{C}(A_1)\otimes \mathcal{C}(T_k)^{-1}$.

As a generalization, Flohr, Grabow and Koehn \cite[Eqs.\ (2.28)--(2.30)]{FGK} conjectured the identities \eqref{id-1}, \eqref{id-3} and \eqref{id-4} below. Their conjecture was proved by Warnaar \cite{Warnaar}. Warnaar also proved an additional identity \eqref{id-2} that generalizes \eqref{eq-KKMM} in the case $r=1$. We state these results together as the following theorem.
\begin{theorem}\label{thm-main}
Let $k\geq 3$, $\lambda \in \mathbb{Q}$ and $c_{\lambda,k}=\frac{\lambda^2}{4k}-\frac{1}{24}$. We have
\begin{align}
&q^{c_{\lambda,k}}\sum_{\substack{n=(n_1,\dots,n_k)^\mathrm{T}\in \mathbb{N}^k \\ n_{k-1}\equiv n_k \!\!\! \pmod{2}}}  \frac{q^{n^\mathrm{T}\mathcal{C}(D_k)^{-1}n+\frac{\lambda}{2} (n_{k-1}-n_k)}}{(q;q)_{n_1}\cdots (q;q)_{n_k}}=\frac{1}{\eta(\tau)}\sum_{n\in \mathbb{Z}} q^{k(n+\frac{\lambda}{2k})^2}, \label{id-1} \\
&q^{c_{\lambda,k}}\sum_{\substack{n=(n_1,\dots,n_k)^\mathrm{T}\in \mathbb{N}^k \\ n_{k-1}\not\equiv n_k \!\!\! \pmod{2}}}  \frac{q^{n^\mathrm{T}\mathcal{C}(D_k)^{-1}n+\frac{\lambda}{2} (n_{k-1}-n_k)}}{(q;q)_{n_1}\cdots (q;q)_{n_k}}=\frac{1}{\eta(\tau)}\sum_{n\in \mathbb{Z}} q^{k(n+\frac{k-\lambda}{2k})^2}. \label{id-2}
\end{align}
Moreover, for $\lambda \in \{1,2,\dots,k-1\}$ we have
\begin{align}
&q^{c_{\lambda,k}}\sum_{\substack{n=(n_1,\dots,n_k)^\mathrm{T}\in \mathbb{N}^k \\ n_{k-1}\equiv n_k \!\!\! \pmod{2}}}  \frac{q^{n^\mathrm{T}\mathcal{C}(D_k)^{-1}n+\sum_{i=k-\lambda}^{k-2}(i-k+\lambda+1)n_i+\frac{\lambda}{2} (n_{k-1}+n_k)}}{(q;q)_{n_1}\cdots (q;q)_{n_k}} \nonumber \\
&=\frac{1}{\eta(\tau)}
\sum_{n\in \mathbb{Z}}(2n+1)q^{k(n+\frac{\lambda}{2k})^2}, \label{id-3} \\
&q^{c_{\lambda,k}}\sum_{\substack{n=(n_1,\dots,n_k)^\mathrm{T}\in \mathbb{N}^k \\ n_{k-1}\not\equiv n_k  \!\!\! \pmod{2}}}  \frac{q^{n^\mathrm{T}\mathcal{C}(D_k)^{-1}n+\sum_{i=k-\lambda}^{k-2}(i-k+\lambda+1)n_i+\frac{\lambda}{2} (n_{k-1}+n_k)}}{(q;q)_{n_1}\cdots (q;q)_{n_k}} \nonumber \\
&=\frac{2}{\eta(\tau)}
\sum_{n\in \mathbb{Z}}nq^{k(n-\frac{k-\lambda}{2k})^2}.  \label{id-4}
\end{align}
\end{theorem}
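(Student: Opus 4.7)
The plan is to derive all four identities from iterated applications of Bailey's lemma, exploiting the fact that the $D_k$ Dynkin diagram is the tadpole $T_{k-1}$ with a forked last node. Using \eqref{eq-2D-inverse}, the quadratic form on the left-hand side decomposes as
\[
n^{\mathrm{T}}\mathcal{C}(D_k)^{-1}n=\sum_{1\le i,j\le k-2}\min(i,j)\,n_i n_j+P(n_{k-1}+n_k)+\frac{k-1}{4}(n_{k-1}+n_k)^2+\frac{1}{4}(n_{k-1}-n_k)^2,
\]
where $P=\sum_{i=1}^{k-2} i\,n_i$. For \eqref{id-1} the parity constraint allows the substitution $(n_{k-1}+n_k,n_{k-1}-n_k)=(2t,2u)$; introducing the auxiliary variable $\tilde n_{k-1}=t$ (with $\tilde n_j=n_j$ for $j\le k-2$), the sum of body and fork quadratic terms telescopes into the standard Andrews--Gordon form $\sum_{j=1}^{k-1}\tilde N_j^2$ in $k-1$ variables, where $\tilde N_j=\sum_{i\ge j}\tilde n_i$. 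The remaining fork contribution is the inner sum
\[
\sum_{|u|\le t}\frac{q^{u^2+\lambda u}}{(q;q)_{t+u}(q;q)_{t-u}},
\]
which, after symmetrization in $u$, is precisely the $\beta_t$-side of a Bailey pair relative to $a=1$ with $\alpha_u=q^{u^2+\lambda u}$.

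I would next feed this Bailey pair into $k-2$ iterations of Bailey's lemma (with $\rho_1,\rho_2\to\infty$), consuming the body variables $\tilde n_1,\dots,\tilde n_{k-2}$ in turn: at each step the $\alpha$-side is multiplied by $q^{u^2}$, while the $\beta$-side is convolved with the kernel $q^{j^2}/(q;q)_{\tilde n_j-j}$, building the Andrews--Gordon tower. In the limit $\tilde n_{k-1}\to\infty$, the resulting Bailey identity reads
\[
\text{LHS of \eqref{id-1}}=\frac{1}{(q;q)_\infty}\sum_{u\in\mathbb Z} q^{ku^2+\lambda u},
\]
and multiplying by $q^{c_{\lambda,k}}$ and completing the square in $u$ reproduces the theta-quotient on the right. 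The identity \eqref{id-2} follows by the same argument with the odd-parity substitution $(n_{k-1}+n_k,n_{k-1}-n_k)=(2t+1,2u+1)$, which shifts $u\mapsto u+\tfrac12$ in the theta kernel.

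For \eqref{id-3} and \eqref{id-4}, the weights $(2n+1)$ and $2n$ in the theta sums signal a ``derived'' Bailey pair, for instance one in which $\alpha_u$ is modified by the factor $(1-q^{2u+\lambda})$ or in which the standard $\alpha_u=q^{u^2+\lambda u}$ is replaced by a linear combination producing the $(2n+1)$-weight via Jacobi's identity. The additional linear exponent $\sum_{i=k-\lambda}^{k-2}(i-k+\lambda+1)n_i$ should then emerge as the propagation of this modification through the last $\lambda-1$ Bailey iterations: the twist is activated at the $(k-\lambda)$th step of the chain and adds one further linear contribution at each successive step, so that the body index $i=k-\lambda,k-\lambda+1,\dots,k-2$ receives the coefficient $1,2,\dots,\lambda-1=i-k+\lambda+1$.

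The principal obstacle is the combinatorial bookkeeping that tracks how the $\lambda$-twist and its $(2n+1)$-enhancement propagate through each invocation of Bailey's lemma, verifying in particular that the cone-shaped linear exponent in \eqref{id-3} and \eqref{id-4} arises with exactly the stated coefficients. A cleaner alternative, and the route to the parametric generalization announced in the abstract, is to keep $\lambda$ as a continuous variable throughout and interpret the identity as a specialization of a single classical transformation such as Watson's $_8\phi_7$ summation or a base-change in Bailey's lemma, thereby sidestepping the iterative bookkeeping.
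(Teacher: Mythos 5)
Your argument for \eqref{id-1} and \eqref{id-2} is sound and is essentially the paper's: the same splitting of the quadratic form, the same parity substitutions, the same identification of the inner sum over $u$ as the $\beta$-side of a Bailey pair relative to $1$ (even case) or $q$ (odd case), followed by $k-2$ iterations of \eqref{BP-S1} and a limiting application of \eqref{eq-BP-id-key}. (The paper runs this computation once with free parameters $a$ and $x$, obtaining Theorem \ref{thm-gen-1}, and then specializes $a=k/2$, $x=q^{\lambda/2}$; your version is that computation with the parameters already fixed.)

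For \eqref{id-3} and \eqref{id-4}, however, there is a genuine gap: you name the difficulty --- producing the weights $2n+1$ and $2n$ together with the cone-shaped linear exponent --- but you do not supply the mechanism, and the candidates you float would not work as stated. A factor $(1-q^{2u+\lambda})$ in $\alpha_u$ does not generate the weight $2n+1$, Jacobi's identity plays no role, and the fallback of ``keeping $\lambda$ continuous'' and invoking Watson's transformation cannot apply here, since in \eqref{id-3}--\eqref{id-4} $\lambda$ occurs as a summation bound in $\sum_{i=k-\lambda}^{k-2}$ and must be a positive integer (the paper's parametric generalization covers only \eqref{id-1}--\eqref{id-2}). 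The missing idea is a change of the Bailey-pair \emph{parameter}, not of $\alpha$ by hand. After telescoping, the linear exponent becomes $\sum_{i=k-\lambda}^{k-2}s_i-(\lambda-1)s_{k-1}$ in the variables $s_j=n_j+\cdots+n_{k-2}+\tfrac12(n_{k-1}+n_k)$, so the innermost pair is the bare pair relative to $1$ with $\alpha_0=1$, $\alpha_r=2q^{r^2}$ for $r\ge1$; raising the parameter from $1$ to $q$ via \eqref{eq-BP-lift} gives $\alpha_n^{(2)}(q;q)=(2n+1)(1-q^{2n+1})q^{n^2}/(1-q)$, the weight $2n+1$ being exactly the telescoped sum $\sum_{r=0}^{n}q^{-r^2}\alpha_r(1;q)$. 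The $\lambda$ indices carrying a linear term are then consumed by $\lambda$ iterations of \eqref{BP-S1} at parameter $q$ (each producing $q^{n^2+n}$ instead of $q^{n^2}$), after which one must \emph{lower} the parameter back to $1$ via \eqref{eq-BP-reduce} before running the remaining $k-\lambda-2$ iterations; this descent converts $(2n+1)q^{(\lambda+1)n^2+\lambda n}$ into a two-term difference whose bilateral rearrangement yields $\sum_{n\in\mathbb{Z}}(2n+1)q^{kn^2+\lambda n}$. For \eqref{id-4} the analogous ladder runs $q\to q^2\to q$ and produces the weight $n+1$, hence $2n$ after symmetrization. Without this up-and-down movement of the parameter, matched precisely to where the linear terms sit, the ``bookkeeping'' you defer cannot be completed.
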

Note that in \cite{FGK} and \cite{Warnaar} the identities \eqref{id-1} and \eqref{id-2} were only stated for $\lambda\in \{0,1,\dots,k\}$, but we find that they hold for arbitrary $\lambda$. The right sides of \eqref{id-1}--\eqref{id-4} are bosonic character formulae corresponding to the $c_{k,1}$ logarithmic conformal field theory (see \cite{FGK} for details).

A key to Warnaar's proof is that the matrix $2\mathcal{C}(D_k)^{-1}$ contains the submatrix
$2G_{k-2}$ (see \eqref{eq-matrix-G} and \eqref{eq-2D-inverse}) which generates the Nahm sums in the Andrews--Gordon identities. This allows him to use Andrews' identity \cite[Eq.~(2.1)]{Andrews1974}
\begin{align}\label{eq-Andrews}
    &\sum_{n_1,\dots,n_{k-1}\geq 0} \frac{x^{N_1+\cdots +N_{k-1}}q^{N_1^2+\cdots +N_{k-i}^2+N_i+\cdots+N_{k-1}}}{(q;q)_{n_1}\cdots (q;q)_{n_{k-1}}} \nonumber \\
    &=\frac{1}{(xq;q)_\infty} \sum_{j=0}^\infty (-1)^jx^{kj}q^{\binom{j}{2}+kj^2+(k-i+1)j}(1-x^iq^{i(2j+1)})\frac{(xq;q)_j}{(q;q)_j}
\end{align}
to eliminate the summation variables $n_1,n_2,\dots,n_{k-2}$ simultaneously. Here $N_j$ is defined as in \eqref{eq-N-sum}.

This paper aims to provide a new proof for the above identities \eqref{id-1}--\eqref{id-4} and to give the following generalization of the identities \eqref{id-1} and \eqref{id-2}. We will not rely on \eqref{eq-Andrews} but proceed in a rather straightforward way using Bailey pairs.
\begin{theorem}\label{thm-gen-1}
Let $k\geq 2$, $a>\frac{1}{2}(k-1)$ and $\widetilde{A}=(\widetilde{a}_{ij})_{k\times k}$ be the matrix with elements
\begin{equation}\label{wA-defn}
\begin{split}
\widetilde{a}_{k,k-1}=\widetilde{a}_{k-1,k}=k-1-a, \quad \widetilde{a}_{k-1,k-1}=\widetilde{a}_{k,k}=a, \\
\widetilde{a}_{i,k-1}=\widetilde{a}_{i,k}=\widetilde{a}_{k-1,i}=\widetilde{a}_{k,i}=i, \quad 1\leq i\leq k-2, \\
\widetilde{a}_{ij}=2\min(i,j), \quad 1\leq i,j\leq k-2.
\end{split}
\end{equation}
Then $\widetilde{A}$ is positive and we have
\begin{align}
    \sum_{\substack{n=(n_1,\dots,n_k)^\mathrm{T}\in \mathbb{N}^k \\ n_{k-1}\equiv n_k \!\!\pmod{2}}}  \frac{q^{\frac{1}{2}n^\mathrm{T}\widetilde{A}n}x^{n_{k-1}-n_k}}{(q;q)_{n_1}\cdots (q;q)_{n_k}}=\frac{(-q^{2a}x^2,-q^{2a}x^{-2},q^{4a};q^{4a})_\infty}{(q;q)_\infty}, \label{ax-even}\\
     \sum_{\substack{n=(n_1,\dots,n_k)^\mathrm{T}\in \mathbb{N}^k \\ n_{k-1}\not\equiv n_k \!\!\pmod{2}}}  \frac{q^{\frac{1}{2}n^\mathrm{T}\widetilde{A}n}x^{n_{k-1}-n_k}}{(q;q)_{n_1}\cdots (q;q)_{n_k}}=q^{\frac{1}{2}a}x\frac{(-q^{4a}x^2,-x^{-2},q^{4a};q^{4a})_\infty}{(q;q)_\infty} \label{ax-odd}
\end{align}
and
\begin{align}\label{eq-thm-ax}
    \sum_{n=(n_1,\dots,n_k)^\mathrm{T}\in \mathbb{N}^k}  \frac{q^{\frac{1}{2}n^\mathrm{T}\widetilde{A}n}x^{n_{k-1}-n_k}}{(q;q)_{n_1}\cdots (q;q)_{n_k}}=\frac{(-xq^{a/2},-q^{a/2}x^{-1},q^a;q^a)_\infty}{(q;q)_\infty}.
\end{align}
\end{theorem}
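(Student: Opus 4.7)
The plan is to reduce \eqref{eq-thm-ax} to an iterated Durfee-rectangle identity via a carefully chosen change of variables, then obtain \eqref{ax-even} and \eqref{ax-odd} by parity extraction. Positive definiteness of $\widetilde{A}$ follows from a Schur-complement argument with respect to the top-left $(k-2)\times(k-2)$ block $2G_{k-2}$: a direct computation on the tridiagonal matrix $\mathcal{C}(T_{k-2})=G_{k-2}^{-1}$ shows $\mathcal{C}(T_{k-2})\mathbf{v}=e_{k-2}$ for $\mathbf{v}=(1,2,\dots,k-2)^T$, hence $\mathbf{v}^T(2G_{k-2})^{-1}\mathbf{v}=(k-2)/2$, and the Schur complement reduces to $\frac{1}{2}\begin{pmatrix}2a-k+2 & k-2a\\ k-2a & 2a-k+2\end{pmatrix}$, whose determinant $2a-k+1$ is positive precisely when $a>(k-1)/2$.

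To prove \eqref{eq-thm-ax}, set $N_j:=n_j+n_{j+1}+\cdots+n_{k-2}$ for $1\le j\le k-2$, so that
$$\tfrac{1}{2}n^T\widetilde{A}n=\sum_{j=1}^{k-2}N_j^2+(n_{k-1}+n_k)\sum_{j=1}^{k-2}N_j+\tfrac{a}{2}(n_{k-1}-n_k)^2+(k-1)n_{k-1}n_k.$$
For the portion of the sum with $s:=n_{k-1}-n_k\ge 0$, introduce new variables $M_j:=N_j+n_k$ for $1\le j\le k-2$ and $M_{k-1}:=n_k$, so that $M_1\ge M_2\ge\cdots\ge M_{k-1}\ge 0$. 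A direct expansion collapses the exponent and denominator into the clean form
$$\tfrac{1}{2}n^T\widetilde{A}n=\sum_{j=1}^{k-1}M_j^2+s\sum_{j=1}^{k-1}M_j+\tfrac{a}{2}s^2,\qquad \prod_{i=1}^{k}(q;q)_{n_i}=\prod_{j=1}^{k-2}(q;q)_{M_j-M_{j+1}}\cdot(q;q)_{M_{k-1}}(q;q)_{M_{k-1}+s},$$
and the inner sum over $M_1,\dots,M_{k-1}$ becomes an instance of the iterated Durfee-rectangle identity
$$\sum_{M_1\ge\cdots\ge M_{k-1}\ge 0}\frac{q^{\sum_j M_j^2+s\sum_j M_j}}{\prod_{j=1}^{k-2}(q;q)_{M_j-M_{j+1}}\,(q;q)_{M_{k-1}}(q;q)_{M_{k-1}+s}}=\frac{1}{(q;q)_\infty},$$
obtained from $k-2$ iterations of Bailey's lemma applied to the unit Bailey pair $\alpha_n=\delta_{n,0}$, $\beta_n=1/\bigl((q;q)_n(q^{s+1};q)_n\bigr)$ relative to $a=q^s$, with base case ($k=2$) the classical Durfee rectangle identity.

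The $s<0$ contribution is handled by the symmetry $(n_{k-1},n_k,x)\mapsto(n_k,n_{k-1},x^{-1})$, and summing both pieces yields $\frac{1}{(q;q)_\infty}\sum_{s\in\mathbb{Z}}x^s q^{as^2/2}$, which matches the RHS of \eqref{eq-thm-ax} by Jacobi's triple product. Identities \eqref{ax-even} and \eqref{ax-odd} then follow by replacing $x\mapsto-x$ in \eqref{eq-thm-ax} and taking the half-sum and half-difference with the original to isolate the even and odd parities of $n_{k-1}-n_k$, after re-expressing the resulting theta series by Jacobi's triple product once more. The main obstacle is spotting the substitution $M_j=N_j+n_k$: it simultaneously absorbs the bulk-to-leaf coupling $(n_{k-1}+n_k)\sum_j N_j$ and the leaf cross-term $(k-1)n_{k-1}n_k$ into the single clean quadratic form $\sum_{j=1}^{k-1}M_j(M_j+s)$, after which the Bailey chain and Jacobi triple product finish the argument essentially mechanically.
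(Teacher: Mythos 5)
Your proposal is correct, and it takes a route that is genuinely different from both of the paper's proofs. You prove the coefficient-wise refinement first: for each fixed $s=n_{k-1}-n_k\geq 0$ the substitution $M_j=N_j+n_k$, $M_{k-1}=n_k$ does collapse the quadratic form to $\sum_{j=1}^{k-1}M_j(M_j+s)+\tfrac{a}{2}s^2$ (I checked this against \eqref{eq-quadratic}), and the resulting multisum is the Bailey chain built from the unit pair $\alpha_n=\delta_{n,0}$, $\beta_n=1/\big((q;q)_n(q^{s+1};q)_n\big)$ relative to $q^s$; note only that $(q^{s+1};q)_{M_{k-1}}=(q;q)_{M_{k-1}+s}/(q;q)_s$, the factor $(q;q)_s$ cancelling against $(q^{s+1};q)_\infty=(q;q)_\infty/(q;q)_s$, so your stated identity matches the chain's output. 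What you establish this way is exactly the paper's Corollary \eqref{eq-cor}, proved directly -- answering the question from \cite{Wang-rank3} that the paper discusses after that corollary -- and \eqref{eq-thm-ax}, \eqref{ax-even}, \eqref{ax-odd} then follow formally by the triple product and the $x\mapsto -x$ parity trick. By contrast, the paper's first proof splits by the parity of $n_{k-1}+n_k$, sets $n_{k-1}=s_{k-1}+r(+1)$, $n_k=s_{k-1}-r$, and seeds the Bailey chain with the nontrivial pairs $\alpha_r^{(1)}(1;q)=q^{(2a+1-k)r^2}(x^{2r}+x^{-2r})$ and its analogue relative to $q$, proving \eqref{ax-even} and \eqref{ax-odd} directly; its second proof avoids Bailey pairs altogether, using the lifting identity \eqref{eq-lift} to show $F_k=F_{k+1}$ and reducing to Zagier's $k=2$ case. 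Your argument sits between the two: it shares the Durfee-rectangle endpoint with the second proof but packages the rank reduction as a Bailey chain, and it buys the stronger refined statement \eqref{eq-cor} for free; the trade-off is that the paper's parity-split seeding is what extends to the weighted identities \eqref{id-3} and \eqref{id-4}, which your fixed-$s$ decomposition would not reach. One cosmetic point: positivity of the $2\times 2$ Schur complement needs more than a positive determinant (a positive diagonal entry or trace as well); since its eigenvalues are $1$ and $2a-k+1$, this is immediate, but it should be said.
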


Setting $a=\frac{1}{2}k$ and $x=q^{\frac{\lambda}{2}}$ in \eqref{ax-even} and \eqref{ax-odd}, we obtain \eqref{id-1} and \eqref{id-2}, respectively. Note that when $k=2$ and $k=3$, the matrix in Theorem \ref{thm-gen-1} is
 \begin{align*}
   \widetilde{A}_2=\begin{pmatrix} a & 1-a \\ 1-a & a \end{pmatrix} \quad \text{and} \quad   \widetilde{A}_3= \left(\begin{array}{ccc}
            2 & 1 & 1 \\
            1 & a & 2-a \\
            1 & 2-a & a
        \end{array} \right),
   \end{align*}
respectively. The matrix $\widetilde{A}_2$ is Zagier's first example in his rank two list \cite[Table 2]{Zagier}. Zagier \cite[Eq.~(26)]{Zagier} proved that
\begin{align}\label{eq-Zagier-rank2}
    \sum_{n=(n_1,n_2)^\mathrm{T}\in \mathrm{N}^2}\frac{q^{\frac{1}{2}n^\mathrm{T}\widetilde{A}_2n+\frac{\lambda}{2}(n_1-n_2)}}{(q;q)_{n_1}(q;q)_{n_2}}=\frac{1}{(q;q)_\infty}\sum_{n\in \mathbb{Z}}q^{\frac{1}{2}an^2+\frac{\lambda}{2}n}.
\end{align}
This corresponds to the special instance $(k,x)=(2,q^{\frac{\lambda}{2}})$ of \eqref{eq-thm-ax}.
The second matrix $\widetilde{A}_3$ appeared in the work of Cao and Wang \cite{Cao-Wang2024} which was lifted from $\widetilde{A}_2$ through the identity  (see \cite[p.~20]{Andrews1981} or \cite[Eq.~(13)]{Zagier}):
\begin{align}\label{eq-lift}
    \frac{1}{(q;q)_i(q;q)_j}=\sum_{\ell \geq 0} \frac{q^{(i-\ell)(j-\ell)}}{(q;q)_\ell(q;q)_{i-\ell}(q;q)_{j-\ell}}.
\end{align}
Substituting \eqref{eq-lift} into \eqref{eq-Zagier-rank2} with $(i,j,\ell)=(n_1,n_2,n_3)$, replacing $n_i$ by $n_i+n_3$ ($i=1,2$) and then replacing $(n_1,n_2,n_3)$ by $(n_2,n_3,n_1)$, we obtain (see also \cite[(3.2) and (1.8)]{Cao-Wang2024})
\begin{align}\label{eq-intro-Z}
    \sum_{\substack{n=(n_1,n_2,n_3)^\mathrm{T}\in \mathbb{N}^3}} \frac{q^{\frac{1}{2}n^\mathrm{T}\widetilde{A}_3n+\frac{\lambda}{2}(n_2-n_3)}}{(q;q)_{n_1}(q;q)_{n_2}(q;q)_{n_3}}=\frac{1}{(q;q)_\infty}\sum_{n\in \mathbb{Z}} q^{\frac{1}{2}an^2+\frac{\lambda}{2} n}.
\end{align}
This process is called a lifting operation in \cite{Cao-Wang2024}. The identity \eqref{eq-intro-Z} corresponds to the special case $(k,x)=(3,q^{\frac{\lambda}{2}})$ of \eqref{eq-thm-ax}. The case $a=2$ of \eqref{eq-intro-Z} was conjectured by Zagier \cite[Eq.~(33)]{Zagier} corresponding to the seventh example in his rank three list \cite[Table 3]{Zagier}, and was first proved by Wang \cite{Wang-rank3}. Wang \cite[(4.49)]{Wang-rank3} also stated the stronger but essentially equivalent form of \eqref{eq-intro-Z}:
\begin{align}\label{eq-Wang-id}
    \sum_{n_1,n_2,n_3\geq 0} \frac{q^{n_1^2+n_2^2+n_3^2+n_1(n_2+n_3)}x^{n_2-n_3}}{(q;q)_{n_1}(q;q)_{n_2}(q;q)_{n_3}}=\frac{(-qx,-qx^{-1},q^2;q^2)_\infty}{(q;q)_\infty}.
\end{align}
As such, the identity \eqref{eq-thm-ax} can be regarded as a generalization of \eqref{eq-Zagier-rank2}, \eqref{eq-intro-Z} and \eqref{eq-Wang-id}. It should be emphasized that the proof of \eqref{eq-intro-Z} (equivalently \eqref{eq-Wang-id}) in \cite{Wang-rank3} is based on the constant term method and does not use Bailey pairs. It seems difficult to apply the method there to prove the general identity \eqref{eq-thm-ax}.

The above discussion of special instances of \eqref{eq-thm-ax} eventually leads us to find another direct proof for Theorem \ref{thm-gen-1}. This short proof does not use any Bailey pairs. Instead, we will show that the identity \eqref{eq-thm-ax} can be deduced from the  case $k=2$ through repetitive use of \eqref{eq-lift}.

The remainder of this paper is organized as follows. In Section \ref{sec-pre} we recall some basic knowledge about Bailey pairs. In Section \ref{sec-proof} we first give two different proofs for Theorem \ref{thm-gen-1}, and then present our new proof for Theorem \ref{thm-main}. Finally, in Section \ref{sec-remark} we present an interesting consequence of Theorem \ref{thm-gen-1} and briefly discuss the Nahm sums related to the matrices $\frac{1}{2}\mathcal{C}(D_k)$, $\mathcal{C}(D_k)$ and $\mathcal{C}(D_k)^{-1}$.

\section{Preliminaries}\label{sec-pre}
Recall the Jacobi triple product identity \cite[Theorem 2.8]{Andrews1998}
\begin{align}\label{jtp}
    \sum_{n=-\infty}^\infty x^nq^{n^2}=(-qx,-q/x,q^2;q^2)_\infty, \quad x\neq 0.
\end{align}
A pair of sequences $(\alpha_n(a;q),\beta_n(a;q))$ is called a Bailey pair relative to the parameter $a$ if for all $n\geq 0$,
 \begin{align}\label{defn-BP}
     \beta_n(a;q)=\sum_{k=0}^n\frac{\alpha_k(a;q)}{(q;q)_{n-k}(aq;q)_{n+k}}.
 \end{align}
As direct consequences of the Bailey lemma (see, e.g.~ \cite[Theorem 2.1]{Warnaar-2001}), we obtain a Bailey pair $(\alpha_n',\beta_n')$ from a Bailey pair $(\alpha_n,\beta_n)$ through the formula (see, e.g.~\cite[Eq.\ (S1)]{Bressoud2000} or \cite[(12)]{Warnaar-2001}):
\begin{align}\label{BP-S1}
\alpha_n'(a;q)=a^nq^{n^2}\alpha_n(a;q), \quad \beta_n'(a;q)=\sum_{r=0}^n \frac{a^rq^{r^2}}{(q;q)_{n-r}}\beta_r(a;q).
\end{align}
When $n\rightarrow \infty$, we deduce from \eqref{defn-BP} and \eqref{BP-S1} that
\begin{align}\label{eq-BP-id-key}
\sum_{n=0}^\infty a^nq^{n^2}\beta_n(a;q)=\frac{1}{(aq;q)_\infty} \sum_{n=0}^\infty a^n q^{n^2}\alpha_n(a;q).
\end{align}

The following lemma allows us to change the parameter of the Bailey pair.
\begin{lemma}\label{lem-BP-change}
If $(\alpha_n(a;q),\beta_n(a;q))$ is a Bailey pair relative to $a$, then $(\alpha_n',\beta_n')$ is a Bailey pair relative to $aq$:
\begin{equation}\label{eq-BP-lift}
\begin{split}
&\alpha_n'(aq;q)=\frac{(1-aq^{2n+1})a^nq^{n^2}}{1-aq}\sum_{r=0}^n a^{-r}q^{-r^2}\alpha_r(a;q), \\
&\beta_n'(aq;q)=\beta_n(a;q).
\end{split}
\end{equation}
Moreover, $(\widetilde{\alpha}_n,\widetilde{\beta}_n)$ is a Bailey pair relative to $a/q$ where
\begin{align}\label{eq-BP-reduce}
\widetilde{\alpha}_0(a/q;q)&=\alpha_0(a;q), \quad \widetilde{\alpha}_n(a/q;q)=(1-a)\Big(\frac{\alpha_n(a;q)}{1-aq^{2n}}-\frac{aq^{2n-2}\alpha_{n-1}(a;q)}{1-aq^{2n-2}}   \Big), \nonumber \\
\widetilde{\beta}_n(a/q;q)&=\beta_n(a;q).
\end{align}
\end{lemma}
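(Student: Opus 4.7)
The plan is to prove both parts of Lemma 2.1 by direct substitution into the defining relation \eqref{defn-BP} of a Bailey pair, followed by elementary $q$-Pochhammer manipulations. No deeper $q$-series machinery is required.

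For the reduction $a \mapsto a/q$, I would insert the formula for $\widetilde{\alpha}_k(a/q;q)$ into $\sum_{k=0}^n \widetilde{\alpha}_k(a/q;q)/[(q;q)_{n-k}(a;q)_{n+k}]$ and split the sum according to the two terms appearing in the definition of $\widetilde{\alpha}_k$. After shifting the index in the negative part and regrouping by $\alpha_k(a;q)$, the coefficient of $\alpha_k(a;q)$ takes the form
$$\frac{1-a}{1-aq^{2k}}\left(\frac{1}{(q;q)_{n-k}(a;q)_{n+k}} - \frac{aq^{2k}}{(q;q)_{n-k-1}(a;q)_{n+k+1}}\right).$$
A short calculation using $(q;q)_{n-k}=(1-q^{n-k})(q;q)_{n-k-1}$ and $(a;q)_{n+k+1}=(1-aq^{n+k})(a;q)_{n+k}$ collapses the bracketed difference to $(1-aq^{2k})/[(q;q)_{n-k}(a;q)_{n+k+1}]$; combined with $(a;q)_{n+k+1}=(1-a)(aq;q)_{n+k}$, the coefficient reduces exactly to $1/[(q;q)_{n-k}(aq;q)_{n+k}]$. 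The sum then becomes the original Bailey pair identity, recovering $\widetilde{\beta}_n(a/q;q)=\beta_n(a;q)$.

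For the lifting $a \mapsto aq$, I would insert the formula for $\alpha'_k(aq;q)$ into $\sum_{k=0}^n \alpha'_k(aq;q)/[(q;q)_{n-k}(aq^2;q)_{n+k}]$ and swap the order of the resulting double sum over $k$ and $r$. The argument then reduces to showing the telescoping identity
$$\sum_{k=r}^n \frac{(1-aq^{2k+1})a^k q^{k^2}}{(1-aq)(q;q)_{n-k}(aq^2;q)_{n+k}}=\frac{a^r q^{r^2}}{(q;q)_{n-r}(aq;q)_{n+r}}.$$
Setting $V_k:=a^k q^{k^2}/[(q;q)_{n-k}(aq;q)_{n+k}]$ and using $(aq^2;q)_{n+k}=(aq;q)_{n+k+1}/(1-aq)$, a direct comparison of numerators shows that each summand equals $V_k-V_{k+1}$ for $0\le k<n$ and equals $V_n$ at $k=n$. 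The sum telescopes to $V_r$, which is the desired right-hand side, completing the verification for $\alpha'_n,\beta'_n$.

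The main obstacle is locating the correct telescoping decomposition in the lifting step. Once one introduces the auxiliary quantity $V_k$ (suggested by the form of the target right-hand side) and rewrites $(aq^2;q)_{n+k}$ in terms of $(aq;q)_{n+k+1}$, verifying $V_k-V_{k+1}=$(summand) is a one-line cancellation of numerators; without that guess, the $q$-hypergeometric algebra can appear rather opaque, especially since the summand carries the alternating-sign factor $(1-aq^{2k+1})$ that is not immediately reminiscent of any standard telescoping kernel. The reduction step is comparatively mechanical because the telescoping structure is already encoded in the difference form of $\widetilde{\alpha}_n$.
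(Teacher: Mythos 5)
Your proof is correct; I checked both telescoping computations and they go through exactly as you describe (the numerator cancellation $a^kq^{k^2}[(1-aq^{n+k+1})-aq^{2k+1}(1-q^{n-k})]=a^kq^{k^2}(1-aq^{2k+1})$ in the lifting step, and the collapse $(1-aq^{n+k})-aq^{2k}(1-q^{n-k})=1-aq^{2k}$ followed by $(a;q)_{n+k+1}=(1-a)(aq;q)_{n+k}$ in the reduction step, with the boundary terms $k=n$ handled by the convention $1/(q;q)_{-1}=0$). Your route is, however, genuinely different from the paper's: the paper does not prove Lemma \ref{lem-BP-change} directly at all, but instead cites \eqref{eq-BP-lift} as a consequence of a transformation formula of Lovejoy \cite[(2.4) and (2.5)]{Lovejoy2004} and \eqref{eq-BP-reduce} as a special case of a general theorem of Warnaar \cite[Theorem 3.2]{Warnaar-2001} on Bailey lattices. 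What your argument buys is self-containedness: both assertions are reduced to one-line telescoping identities verified from the definition \eqref{defn-BP} alone, with no appeal to the Bailey lattice machinery. What the citations buy is context: Warnaar's theorem, for instance, gives a one-parameter family of such base-changing transformations of which \eqref{eq-BP-reduce} is a single instance, so the reader sees where these formulas live in the broader theory. Either treatment is legitimate for the purposes of this paper.
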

The first assertion \eqref{eq-BP-lift} follows from a transformation formula in Lovejoy's work \cite[(2.4) and (2.5)]{Lovejoy2004} (see also \cite[(2.40)]{Cao-Wang2024}).
The second assertion \eqref{eq-BP-reduce} is a special instance of a general theorem of Warnaar \cite[Theorem 3.2]{Warnaar-2001} (see also \cite[Lemma 2.3]{Cao-Wang2024}).

\section{Proofs of the theorems}\label{sec-proof}

\begin{proof}[First proof of Theorem \ref{thm-gen-1}]
Note that
\begin{align*}
\widetilde{A}=\begin{pmatrix}
2G_{k-2} & P \\
P^\mathrm{T} & Q
\end{pmatrix}, ~~ P^\mathrm{T}=\begin{pmatrix}
    1 & 2 & \cdots & k-2 \\
    1 & 2 & \cdots & k-2
\end{pmatrix}, ~~ Q=\begin{pmatrix}
    a & k-1-a \\ k-1-a & a
\end{pmatrix}.
\end{align*}
It is already known that $2G_{k-2}$ is positive. To prove that $\widetilde{A}$ is positive, it suffices to show that
$\Delta:=Q-P^\mathrm{T}(2G_{k-2})^{-1}P$ (the Schur complement of $2G_{2k-2}$ in $\widetilde{A}$)
is positive. In fact, by direct calculations using \eqref{eq-matrix-Tk} we find that
\begin{align}
\Delta=\begin{pmatrix}
    a-\frac{1}{2}k+1 & \frac{1}{2}k-a \\
    \frac{1}{2}k-a & a -\frac{1}{2}k+1
\end{pmatrix}.
\end{align}
The eigenvalues of $\Delta$ are $1$ and $2a-k+1$, respectively. Hence $\Delta$ is positive if and only if $a>\frac{1}{2}(k-1)$.

We denote the left side of \eqref{ax-odd} and \eqref{ax-even} by $S_0(q)$ and $S_1(q)$, respectively.  Note that
\begin{align}\label{eq-quadratic}
&\frac{1}{2}n^\mathrm{T}\widetilde{A}n=\sum_{i=1}^{k-2} in_i^2+\frac{a}{2}(n_{k-1}^2+n_k^2)+2\sum_{1\leq i <j \leq k-2}in_in_j  \nonumber \\
&\quad \quad +\sum_{i=1}^{k-2}in_i(n_{k-1}+n_k) +(k-1-a)n_{k-1} n_k \nonumber \\
&=\big(n_1+n_2+\cdots+n_{k-2}+\frac{n_{k-1}+n_k}{2}\big)^2+\cdots+\big(n_{k-2}+\frac{n_{k-1}+n_k}{2}\big)^2 \nonumber \\
&\quad \quad +\frac{2a+2-k}{4}(n_{k-1}^2+n_k^2) + \frac{k-2a}{2}n_{k-1 } n_k.
\end{align}

According to the parity of $n_{k-1}+n_k$, we divide our discussions into two cases.

\textbf{Case 1.} If $n_{k-1}+n_k$ is even, then we write
\begin{align}\label{nk-even}
n_{k-1}=s_{k-1}+r, ~ n_k=s_{k-1}-r, ~ -s_{k-1}\leq r \leq s_{k-1}, ~s_{k-1}\in \mathbb{N},
\end{align}
and we introduce new variables:
\begin{equation}\label{eq-variable}
\begin{split}
n_1+n_2+\cdots +n_{k-2}+s_{k-1}&=s_1, \\
n_2+\cdots+n_{k-2}+s_{k-1}&=s_2, \\
\cdots  &\\
n_{k-2}+s_{k-1}&=s_{k-2}.
\end{split}
\end{equation}
In other words, we have
\begin{align}\label{eq-variable-relation}
n_1=s_1-s_2,~ n_2=s_2-s_3, ~\cdots,~  n_{k-2}=s_{k-2}-s_{k-1}.
\end{align}

We have
\begin{align}
&{S}_0(q)=\sum_{s_1\geq s_2\geq \cdots \geq s_{k-2}\geq s_{k-1}\geq 0} \frac{q^{s_1^2+s_2^2+\cdots +s_{k-2}^2+s_{k-1}^2}}{(q;q)_{s_1-s_2}(q;q)_{s_2-s_3}\cdots (q;q)_{s_{k-3}-s_{k-2}}(q;q)_{s_{k-2}-s_{k-1}}} \nonumber \\
&\quad \quad \times \sum_{r=-s_{k-1}}^{s_{k-1}} \frac{q^{(2a+1-k)r^2}x^{2r}}{(q;q)_{s_{k-1}+r}(q;q)_{s_{k-1}-r}} \nonumber \\
&=\sum_{s_1\geq s_2\geq \cdots \geq s_{k-2}\geq s_{k-1}\geq 0} \frac{q^{s_1^2+s_2^2+\cdots +s_{k-2}^2+s_{k-1}^2}}{(q;q)_{s_1-s_2}(q;q)_{s_2-s_3}\cdots (q;q)_{s_{k-3}-s_{k-2}}(q;q)_{s_{k-2}-s_{k-1}}} \beta_{s_{k-1}}^{(1)}(1;q), \label{proof-G1-start}
\end{align}
where $(\alpha_r^{(1)}(1;q),\beta_r^{(1)}(1;q))$ is the  Bailey pair relative to 1 with
\begin{align}
\alpha_r^{(1)}(1;q)=\left\{\begin{array}{ll}
1, & r=0, \\
q^{(2a+1-k)r^2}(x^{2r}+x^{-2r}), & r\geq 1.
\end{array}\right.
\end{align}
Substituting this Bailey pair into \eqref{BP-S1} and iterating $i$ times, we obtain the Bailey pairs $(\alpha_n^{(1+i)}(1;q),\beta_n^{(1+i)}(1;q)$ with
\begin{align}
\alpha_n^{(1+i)}(1;q)=q^{in^2}\alpha_n^{(1)}(1;q), \quad i=1,2,\dots,k-2.
\end{align}
Substituting these Bailey pairs into \eqref{proof-G1-start}, we deduce that
\begin{align}\label{proof-S0}
&{S}_0(q)=\sum_{s_1\geq s_2\geq \cdots \geq s_{k-2}\geq 0} \frac{q^{s_1^2+s_2^2+\cdots +s_{k-2}^2}}{(q;q)_{s_1-s_2}(q;q)_{s_2-s_3}\cdots (q;q)_{s_{k-3}-s_{k-2}}}\beta_{s_{k-2}}^{(2)}(1;q) \nonumber \\
&=\cdots =\sum_{s_1\geq 0} q^{s_1^2}\beta_{s_1}^{(k-1)}(1;q) \nonumber \\
&=\frac{1}{(q;q)_\infty} \sum_{n=0}^\infty q^{n^2}\alpha_n^{(k-1)}(1;q) \quad \text{(by \eqref{eq-BP-id-key})} \nonumber \\
&=\frac{1}{(q;q)_\infty}\Big(1+\sum_{n=1}^\infty q^{2an^2}(x^{2n}+x^{-2n})\Big)=\frac{1}{(q;q)_\infty} \sum_{n=-\infty}^\infty q^{2an^2}x^{2n}.
\end{align}
This proves \eqref{ax-odd} upon using \eqref{jtp}.

\textbf{Case 2.} If $n_{k-1}\not\equiv n_k$ (mod 2), then we write
\begin{align}
\label{odd-nk}
n_{k-1}=s_{k-1}+r+1, ~ n_k=s_{k-1}-r, ~ -s_{k-1}-1\leq r \leq s_{k-1},~ s_{k-1}\in \mathbb{N},
\end{align}
and we  introduce the variables $s_1,s_2,\dots,s_{k-2}$ as in \eqref{eq-variable} and \eqref{eq-variable-relation}.

 Using \eqref{odd-nk} and \eqref{eq-variable} we have
\begin{align}\label{G2-proof-start}
&{S}_1(q)=q^{\frac{1}{2}a}\sum_{s_1\geq s_2\geq \cdots \geq s_{k-2}\geq s_{k-1}\geq 0} \frac{q^{s_1^2+\cdots +s_{k-2}^2+s_{k-1}^2+s_1+s_2+\cdots +s_{k-2}+s_{k-1}}}{(q;q)_{s_1-s_2}\cdots (q;q)_{s_{k-2}-s_{k-1}}} \nonumber \\
 &\quad \quad \times \sum_{r=-s_{k-1}-1}^{s_{k-1}} \frac{q^{(2a+1-k)(r^2+r)}x^{2r+1}}{(q;q)_{s_{k-1}-r}(q;q)_{s_{k-1}+r+1}} \nonumber \\
&=q^{\frac{1}{2}a}\sum_{s_1\geq s_2\geq \cdots \geq s_{k-2}\geq s_{k-1}\geq 0} \frac{q^{s_1^2+\cdots +s_{k-2}^2+s_{k-1}^2+s_1+s_2+\cdots +s_{k-2}+s_{k-1}}}{(q;q)_{s_1-s_2}\cdots (q;q)_{s_{k-2}-s_{k-1}}}  \nonumber \\
&\quad \quad \times \sum_{r=0}^{s_{k-1}} \Big(\frac{q^{(2a+1-k)(r^2+r)}x^{2r+1}}{(q;q)_{s_{k-1}-r}(q;q)_{s_{k-1}+r+1}}+\frac{q^{(2a+1-k)(r^2+r)}x^{-2r-1}}{(q;q)_{s_{k-1}-r}(q;q)_{s_{k-1}+r+1}}  \Big) \nonumber \\
&=\frac{q^{\frac{1}{2}a}}{1-q}\sum_{s_1\geq s_2\geq \cdots \geq s_{k-2}\geq s_{k-1}\geq 0} \frac{q^{s_1^2+\cdots +s_{k-2}^2+s_{k-1}^2+s_1+s_2+\cdots +s_{k-2}+s_{k-1}}}{(q;q)_{s_1-s_2}\cdots (q;q)_{s_{k-2}-s_{k-1}}}\beta_{s_{k-1}}^{(1)}(q;q).
\end{align}
Here $(\alpha_r^{(1)}(q;q),\beta_r^{(1)}(q;q))$ is the Bailey pair relative to $q$ with
\begin{align}\label{G2-alpha-pre}
\alpha_r^{(1)}(q;q)=q^{(2a+1-k)(r^2+r)}(x^{2r+1}+x^{-2r-1}), \quad r\geq 0.
\end{align}
Applying \eqref{BP-S1} to this Bailey pair and iterating $i$ times, we obtain the Bailey pairs
$(\alpha_n^{(1+i)}(q;q),\beta_n^{(1+i)}(q;q))$ with
\begin{align}\label{G2-alpha-mid}
\alpha_n^{(1+i)}(q;q)=q^{i(n^2+n)}\alpha_n^{(1)}(q;q), \quad i=1,2,\dots,k-2.
\end{align}
Substituting these Bailey pairs into \eqref{G2-proof-start}, we deduce that
\begin{align}\label{proof-S1}
&{S}_1(q)=\frac{q^{\frac{1}{2}a}}{1-q}\sum_{s_1\geq s_2\geq \cdots \geq s_{k-2}\geq 0} \frac{q^{s_1^2+\cdots +s_{k-2}^2+s_1+s_2+\cdots +s_{k-2}}}{(q;q)_{s_1-s_2}\cdots (q;q)_{s_{k-3}-s_{k-2}}}\beta_{s_{k-2}}^{(2)}(q;q)\nonumber \\
&=\cdots =\frac{q^{\frac{1}{2}a}}{1-q}\sum_{s_1\geq 0} q^{s_1^2+s_1}\beta_{s_1}^{(k-1)}(q;q)\nonumber \\
&=\frac{q^{\frac{1}{2}a}}{(1-q)(q^2;q)_\infty}\sum_{n=0}^\infty q^{n^2+n}\alpha_n^{(k-1)}(q;q) \quad \text{(by \eqref{eq-BP-id-key})} \nonumber \\
&=\frac{q^{\frac{1}{2}a}}{(q;q)_\infty}\sum_{n=0}^\infty q^{2a(n^2+n)}(x^{2n+1}+x^{-2n-1}) \nonumber \\
&=\frac{q^{\frac{1}{2}a}}{(q;q)_\infty}\sum_{n=-\infty}^\infty q^{2a(n^2+n)}x^{2n+1}.
\end{align}
Here for the penultimate line we used \eqref{G2-alpha-pre} and \eqref{G2-alpha-mid}, and we replaced $n$ by $-n-1$ for the second sum to obtain the last equality.
This proves \eqref{ax-even} upon using \eqref{jtp}.

Adding \eqref{proof-S0} and \eqref{proof-S1} together, we deduce that
\begin{align}
    S_0(q)+S_1(q)=\frac{1}{(q;q)_\infty}\sum_{n=-\infty}^\infty q^{\frac{1}{2}an^2}x^n.
\end{align}
This proves \eqref{eq-thm-ax} upon using \eqref{jtp}.
\end{proof}

We now provide another proof for Theorem \ref{thm-gen-1}, which reveals that  \eqref{eq-thm-ax} can be deduced from \eqref{eq-Zagier-rank2}.
\begin{proof}[Second proof of Theorem \ref{thm-gen-1}]
Note that \eqref{ax-even} and \eqref{ax-odd} can be obtained  by extracting the even and odd  powers of $x$ from both sides of \eqref{eq-thm-ax}. Therefore, it suffices to prove \eqref{eq-thm-ax}.

For $n=(n_1,n_2,\dots,n_k)^\mathrm{T}$ we denote $Q_k(n_1,n_2,\dots,n_k)=\frac{1}{2}n^\mathrm{T}\widetilde{A}n$. Replacing $(n_1,n_2,\dots,n_k)$ by $(n_3,n_4,\dots,n_k,n_1,n_2)$, the identity \eqref{eq-thm-ax} is equivalent to
\begin{align}\label{eq-thm-ax-equivalent}
      F_k(q):=\sum_{n_1,\dots,n_k\geq  0}  \frac{q^{Q_k(n_3,n_4,\dots,n_k,n_1,n_2)}x^{n_{1}-n_2}}{(q;q)_{n_1}\cdots (q;q)_{n_k}}=\frac{(-xq^{a/2},-q^{a/2}x^{-1},q^a;q^a)_\infty}{(q;q)_\infty}.
\end{align}
Substituting \eqref{eq-lift} into \eqref{eq-thm-ax-equivalent} with $(i,j,\ell)=(n_1,n_2,n_{k+1})$, and then replacing $n_i$ by $n_i+n_{k+1}$ ($i=1,2$), we deduce that
\begin{align}\label{proof-Fk}
    F_k(q)=\sum_{n_1,\dots,n_k,n_{k+1}\geq 0} \frac{q^{Q_k(n_3,\dots,n_k,n_1+n_{k+1},n_2+n_{k+1})+n_1n_2}x^{n_{1}-n_2}}{(q;q)_{n_1}\cdots (q;q)_{n_{k-1}}(q;q)_{n_{k}}(q;q)_{n_{k+1}}}.
\end{align}
From \eqref{eq-quadratic} we deduce that
\begin{align}
&Q_k(n_3,\dots,n_k,n_1+n_{k+1},n_2+n_{k+1})+n_1n_2\nonumber \\
&=\Big(\frac{n_1+n_2+2n_{k+1}}{2}+n_3+n_4+\cdots+n_{k}\Big)^2+\cdots+\Big(\frac{n_{1}+n_2+2n_{k+1}}{2}+n_k\Big)^2 \nonumber \\
&\quad \quad +\frac{2a+2-k}{4}\Big(\big(n_1+n_{k+1}\big)^2+\big(n_2+n_{k+1}\big)^2\Big) \nonumber \\
&\quad \quad + \frac{k-2a}{2}\big(n_1+n_{k+1}\big)\big(n_2+n_{k+1}\big)+n_1n_2 \nonumber \\
&=\big(\frac{n_1+n_2}{2}+n_3+n_4+\cdots+n_{k}+n_{k+1}\big)^2+\cdots+\big(\frac{n_{1}+n_2}{2}+n_k+n_{k+1}\big)^2 \nonumber \\
&\quad \quad +\Big(\frac{n_1+n_2}{2}+n_{k+1}\Big)^2+\frac{2a+1-k}{4}(n_1^2+n_2^2)+\frac{k+1-2a}{2}n_1n_2 \nonumber \\
&=Q_{k+1}(n_3,n_4,\dots,n_k,n_{k+1},n_1,n_2).
\end{align}
Substituting it into \eqref{proof-Fk}, we conclude that
\begin{align}\label{Fk-rec}
    F_k(q)=F_{k+1}(q), \quad k\geq 2.
\end{align}
It remains to prove \eqref{eq-thm-ax-equivalent} for $k=2$ which is known and equivalent to Zagier's identity \eqref{eq-Zagier-rank2}. For the sake of completeness, we reproduce the proof. Recall the Durfee rectangle identity (see \cite[Corollary 2.6]{Andrews1998} or \cite[Eq.~(27)]{Zagier}):
\begin{align}
\sum_{n_1-n_2=n}\frac{q^{n_1n_2}}{(q;q)_{n_1}(q;q)_{n_2}}=\frac{1}{(q;q)_\infty}.
\end{align}
Utilizing it we deduce that
\begin{align*}
    &F_2(q)=\sum_{n_1,n_2\geq 0} \frac{q^{\frac{a}{2}(n_1-n_2)^2+n_1n_2}x^{n_1-n_2}}{(q;q)_{n_1}(q;q)_{n_2}} \nonumber \\
    &=\sum_{n=-\infty}^\infty q^{\frac{a}{2}n^2}x^n \sum_{n_1-n_2=n}\frac{q^{n_1n_2}}{(q;q)_{n_1}(q;q)_{n_2}}=\frac{1}{(q;q)_\infty}\sum_{n=-\infty}^\infty q^{\frac{a}{2}n^2}x^n. \qedhere
\end{align*}
\end{proof}

If we expand the right side of \eqref{eq-thm-ax} using \eqref{jtp} and then compare the coefficients of $x^N$ on both sides, we obtain the following interesting consequence.
\begin{corollary}
Let $k\geq 2$, $a>0$ and $\widetilde{A}$ be the matrix in \eqref{wA-defn}. For any  $N\in \mathbb{Z}$ we have
\begin{align}\label{eq-cor}
   \sum_{\substack{n=(n_1,\dots,n_k)^\mathrm{T}\in \mathbb{N}^k \\ n_{k-1}-n_k=N}}  \frac{q^{\frac{1}{2}n^\mathrm{T}\widetilde{A}n}}{(q;q)_{n_1}\cdots (q;q)_{n_k}} =\frac{q^{\frac{1}{2}aN^2}}{(q;q)_\infty}.
\end{align}
\end{corollary}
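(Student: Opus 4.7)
The plan is to derive the corollary as a direct consequence of identity \eqref{eq-thm-ax} of Theorem \ref{thm-gen-1}, exactly as the sentence preceding the statement suggests. The only new ingredient needed beyond Theorem \ref{thm-gen-1} is the Jacobi triple product identity \eqref{jtp}.

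First I would apply \eqref{jtp} with the substitution $q \mapsto q^{a/2}$ (so that $q^2$ becomes $q^a$ and $qx$ becomes $q^{a/2}x$) to rewrite
\begin{align*}
(-q^{a/2}x,\,-q^{a/2}x^{-1},\,q^a;q^a)_\infty \;=\; \sum_{n\in\mathbb{Z}} q^{\frac{a}{2}n^2}\,x^n.
\end{align*}
Substituting this into the right-hand side of \eqref{eq-thm-ax} converts the identity of Theorem \ref{thm-gen-1} into
\begin{align*}
\sum_{n\in\mathbb{N}^k}\frac{q^{\frac{1}{2}n^\mathrm{T}\widetilde{A}n}\,x^{n_{k-1}-n_k}}{(q;q)_{n_1}\cdots(q;q)_{n_k}} \;=\; \frac{1}{(q;q)_\infty}\sum_{n\in\mathbb{Z}} q^{\frac{a}{2}n^2}\,x^n.
\end{align*}
Next I would regroup the left-hand side by the integer value $N=n_{k-1}-n_k$, so that both sides are Laurent series in $x$ with coefficients in an appropriate power series ring in $q$. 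Matching the coefficient of $x^N$ then immediately yields \eqref{eq-cor}.

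The only step that merits a moment of attention is the hypothesis on $a$: Theorem \ref{thm-gen-1} is stated under the assumption $a>\tfrac12(k-1)$ required to ensure that $\widetilde{A}$ is positive definite, whereas the corollary is claimed for the wider range $a>0$. For $a>\tfrac12(k-1)$ the coefficient extraction above is immediate. To cover the gap $0<a\le\tfrac12(k-1)$, I would appeal to the second proof of Theorem \ref{thm-gen-1} in the paper, which reduces \eqref{eq-thm-ax} to the rank-two Zagier identity \eqref{eq-Zagier-rank2} by a purely formal recursion based on \eqref{eq-lift} and makes no use of the positivity of $\widetilde{A}$. Parametrising the fibre $\{n_{k-1}-n_k=N\}$ by $n_{k-1}=m+\max(N,0)$, $n_k=m+\max(-N,0)$ with $m\in\mathbb{N}$ shows that the remaining $q$-exponent contains the manifestly non-negative form $\sum_{i=1}^{k-2}(N_i+m+|N|/2)^2 + m^2 + m|N|$, so each coefficient of $x^N$ on the left is a well-defined formal power series in $q$ for any $a>0$. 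Thus \eqref{eq-cor} persists throughout the claimed range.

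The main obstacle, if any, is therefore this minor extension of the range of $a$; the algebraic content of the corollary is essentially free once \eqref{eq-thm-ax} and \eqref{jtp} are in hand, and the proof reduces to one line of symbolic manipulation followed by coefficient comparison.
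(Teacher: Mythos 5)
Your proposal is correct and matches the paper's (one-line) proof: expand the right side of \eqref{eq-thm-ax} by the Jacobi triple product \eqref{jtp} and compare the coefficients of $x^N$ on both sides. Your extra care in extending the range from $a>\tfrac12(k-1)$ to $a>0$ addresses a point the paper leaves implicit, and it can be settled even more directly than by re-running the second proof: by \eqref{eq-quadratic}, on the fibre $n_{k-1}-n_k=N$ one has $\tfrac12 n^{\mathrm{T}}\widetilde{A}n=\tfrac{2a+1-k}{4}N^2+\tfrac14(n_{k-1}+n_k)^2+(\text{$a$-independent squares})$, so the left side of \eqref{eq-cor} is $q^{\frac{a}{2}N^2}$ times a fixed series not depending on $a$, and the identity for a single admissible $a$ implies it for all $a$.
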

The case $k=3$  of \eqref{eq-cor} was pointed out in \cite[(4.50) and (4.51)]{Wang-rank3} as a consequence of \eqref{eq-Wang-id}, and there the second author asked ``if one can prove this fact directly''. Clearly, a slight modification of the second proof above serves as an answer to this question.

We have seen from the second proof that Bailey pairs are not necessary to prove Theorem \ref{thm-gen-1}. Nevertheless, the first proof still has some advantage in the sense that similar arguments will enable us to prove \eqref{id-3} and \eqref{id-4} as well.

\begin{proof}[Proof of Theorem \ref{thm-main}]
The identities \eqref{id-1} and \eqref{id-2} follow from Theorem \ref{thm-gen-1} with $a=\frac{1}{2}k$.

In order to prove \eqref{id-3}, we denote its left side as $T_1(q)$ and introduce the variables $s_1,s_2,\dots,s_{k-1}$ and $r$ as in \eqref{nk-even}--\eqref{eq-variable-relation}. From \eqref{eq-variable-relation} we deduce that
\begin{align}\label{index-simplify}
&\sum_{i=k-\lambda}^{k-2} (i-k+\lambda+1)n_i=\sum_{i=k-\lambda}^{k-2}(i-k+\lambda+1)(s_i-s_{i+1}) \nonumber \\
&=\sum_{i=k-\lambda}^{k-2} s_i-(\lambda-1)s_{k-1}.
\end{align}
Using \eqref{eq-quadratic} and \eqref{index-simplify} we have
\begin{align}\label{S2-proof-start}
&q^{-c_{\lambda,k}}T_1(q)=\sum_{s_1\geq s_2\geq \cdots \geq s_{k-2}\geq s_{k-1}\geq 0} \frac{q^{s_1^2+\cdots +s_{k-2}^2+s_{k-1}^2+s_{k-\lambda}+\cdots +s_{k-2}+s_{k-1}}}{(q;q)_{s_1-s_2}\cdots (q;q)_{s_{k-3}-s_{k-2}}(q;q)_{s_{k-2}-s_{k-1}}} \nonumber \\
&\quad \times \sum_{r=-s_{k-1}}^{s_{k-1}} \frac{q^{r^2}}{(q;q)_{s_{k-1}+r}(q;q)_{s_{k-1}-r}} \nonumber \\
&=\sum_{s_1\geq s_2\geq \cdots \geq s_{k-2}\geq s_{k-1}\geq 0} \frac{q^{s_1^2+\cdots +s_{k-2}^2+s_{k-1}^2+s_{k-\lambda}+\cdots +s_{k-2}+s_{k-1}}}{(q;q)_{s_1-s_2}\cdots (q;q)_{s_{k-3}-s_{k-2}}(q;q)_{s_{k-2}-s_{k-1}}} \beta_{s_{k-1}}^{(1)}(1;q),
\end{align}
where $(\alpha_r^{(1)}(1;q),\beta_r^{(1)}(1;q))$ is the Bailey pair with
\begin{align}
\alpha_r^{(1)}(1;q)=\left\{\begin{array}{ll}
1, & r=0, \\
2q^{r^2}, & r\geq 1.
\end{array}\right.
\end{align}
Applying \eqref{eq-BP-lift} to it, we obtain a Bailey pair relative to $q$: for $n\geq 0$,
\begin{align}
\alpha_n^{(2)}(q;q)&=\frac{(1-q^{2n+1})q^{n^2}}{1-q}\sum_{r=0}^n q^{-r^2}\alpha_r(1;q)=(2n+1)\frac{(1-q^{2n+1})q^{n^2}}{1-q}, \\
\beta_n^{(2)}(q;q)&=\beta_n^{(1)}(1;q).
\end{align}
Substituting this Bailey pair into \eqref{BP-S1} and iterating $i$ times, we obtain the Bailey pairs $(\alpha_n^{(2+i)}(q;q),\beta_n^{(2+i)}(q;q))$ with
\begin{align}
\alpha_n^{(2+i)}(q;q)=q^{i(n^2+n)}\alpha_n^{(2)}(q;q), \quad i=1,2,\dots, \lambda.
\end{align}
Substituting these Bailey pairs into \eqref{S2-proof-start}, we deduce that
\begin{align}\label{S2-proof-mid}
q^{-c_{\lambda,k}}T_1(q)=\sum_{s_1\geq s_2\geq \cdots \geq s_{k-\lambda-1}\geq 0} \frac{q^{s_1^2+s_2^2+\cdots +s_{k-\lambda-1}^2}}{(q;q)_{s_1-s_2}\cdots (q;q)_{s_{k-\lambda-2}-s_{k-\lambda-1}}}\beta_{s_{k-\lambda-1}}^{(\lambda+2)}(q;q).
\end{align}
Next we reduce the parameter from $q$ to $1$. Applying \eqref{eq-BP-reduce} to the Bailey pair $(\alpha_n^{(\lambda+2)}(q;q),\beta_n^{(\lambda+2)}(q;q))$ we obtain a Bailey pair relative to $1$:
\begin{align}
\alpha_n^{(\lambda+3)}(1;q)&=1, \nonumber  \\
\alpha_n^{(\lambda+3)}(1;q)&=(2n+1)q^{(\lambda+1)n^2+\lambda n}-(2n-1)q^{(\lambda+1)(n-1)^2+\lambda (n-1)+2n-1},  \quad n\geq 1, \nonumber \\
\beta_n^{(\lambda+3)}(1;q)&=\beta_n^{(\lambda+2)}(q;q).
\end{align}
Using \eqref{BP-S1} to iterate this Bailey pair $i$ times, we obtain the Bailey pairs $(\alpha_n^{(\lambda+3+i)}(1;q),$ $\beta_n^{(\lambda+3+i)}(1;q))$ with
\begin{align}
\alpha_n^{(\lambda+3+i)}(1;q)=q^{in^2}\alpha_n^{(\lambda+3)}(1;q), \quad i=1,2,\dots,k-\lambda-2.
\end{align}
Substituting these Bailey pairs into \eqref{S2-proof-mid}, we deduce that
\begin{align}
&q^{-c_{\lambda,k}}T_1(q)=\sum_{s_1\geq s_2\geq \cdots \geq s_{k-\lambda-1}\geq 0} \frac{q^{s_1^2+s_2^2+\cdots +s_{k-\lambda-1}^2}}{(q;q)_{s_1-s_2}\cdots (q;q)_{s_{k-\lambda-2}-s_{k-\lambda-1}}}\beta_{s_{k-\lambda-1}}^{(\lambda+3)}(1;q) \nonumber \\
&=\cdots=\sum_{s_1\geq 0} q^{s_1^2}\beta_{s_1}^{(k+1)}(1;q)\nonumber \\
&=\frac{1}{(q;q)_\infty}\sum_{n=0}^\infty q^{n^2}\alpha_n^{(k+1)}(1;q)  \quad \text{(by \eqref{eq-BP-id-key})} \nonumber \\
&=\frac{1}{(q;q)_\infty} \Big(1+\sum_{n=1}^\infty \Big((2n+1)q^{kn^2+\lambda n}-(2n-1)q^{kn^2-\lambda n}\Big) \Big) \nonumber \\
&=\frac{1}{(q;q)_\infty} \sum_{n=-\infty}^\infty (2n+1)q^{kn^2+\lambda n}.
\end{align}
Here we replaced $n$ by $-n$ in the second sum to get the last equality.
This proves \eqref{id-2}.

In order to prove \eqref{id-3}, we denote its left side as $T_2(q)$ and introduce the variables $s_1,s_2,\dots,s_{k-1}$ and $r$ as in \eqref{eq-variable}, \eqref{eq-variable-relation} and \eqref{odd-nk}. We have
\begin{align}\label{S3-proof-start}
&q^{-c_{\lambda,k}}T_2(q) \nonumber \\
&=q^{\frac{1}{4}(k+2\lambda)}\sum_{s_1\geq s_2\geq \cdots \geq s_{k-1}\geq 0} \frac{q^{s_1^2+\cdots +s_{k-2}^2+s_{k-1}^2+s_1+s_2+\cdots +s_{k-\lambda-1}+2s_{k-\lambda}+\cdots +2s_{k-2}+2s_{k-1}}}{(q;q)_{s_1-s_2}\cdots (q;q)_{s_{k-2}-s_{k-1}}} \nonumber \\
 &\quad \quad \times \sum_{r=-s_{k-1}-1}^{s_{k-1}} \frac{q^{r^2+r}}{(q;q)_{s_{k-1}-r}(q;q)_{s_{k-1}+r+1}} \nonumber \\
&=2q^{\frac{1}{4}(k+2\lambda)}\sum_{s_1\geq s_2\geq \cdots \geq s_{k-1}\geq 0} \frac{q^{s_1^2+\cdots +s_{k-2}^2+s_{k-1}^2+s_1+s_2+\cdots +s_{k-\lambda-1}+2s_{k-\lambda}+\cdots +2s_{k-2}+2s_{k-1}}}{(q;q)_{s_1-s_2}\cdots (q;q)_{s_{k-2}-s_{k-1}}} \nonumber \\
 &\quad \quad \times \sum_{r=0}^{s_{k-1}} \frac{q^{r^2+r}}{(q;q)_{s_{k-1}-r}(q;q)_{s_{k-1}+r+1}} \nonumber \\
 &=\frac{2}{1-q}q^{\frac{1}{4}(k+2\lambda)}\sum_{s_1\geq s_2\geq \cdots \geq s_{k-1}\geq 0} \frac{q^{s_1^2+\cdots +s_{k-2}^2+s_{k-1}^2+s_1+s_2+\cdots +s_{k-\lambda-1}+2s_{k-\lambda}+\cdots +2s_{k-2}+2s_{k-1}}}{(q;q)_{s_1-s_2}\cdots (q;q)_{s_{k-2}-s_{k-1}}} \nonumber \\
 &\qquad \qquad \qquad \qquad \times  \beta_{s_{k-1}}^{(1)}(q;q),
\end{align}
where $(\alpha_r^{(1)}(q;q),\beta_r^{(1)}(q;q))$ is a Bailey pair relative to $q$ with
\begin{align}
\alpha_r^{(1)}(q;q)=q^{r^2+r}, \quad r\geq 0.
\end{align}
Applying \eqref{eq-BP-lift} to this Bailey pair we obtain a Bailey pair relative to $q^2$:
\begin{equation}
\begin{split}
\alpha_n^{(2)}(q^2;q)&=\frac{1-q^{2n+2}}{1-q^2}(n+1)q^{n^2+n}, \\
\beta_n^{(2)}(q^2;q)&=\beta_n^{(1)}(q;q).
\end{split}
\end{equation}
Substituting this Bailey pair into \eqref{BP-S1} and iterating $i$ times, we obtain the Bailey pairs $(\alpha_n^{(2+i)}(q^2;q),\beta_n^{(2+i)}(q^2;q))$ with
\begin{align}\label{S3-alpha-pre}
\alpha_n^{(2+i)}(q^2;q)=q^{i(n^2+2n)}\alpha_n^{(2)}(q^2;q), \quad i=1,2,\dots,\lambda.
\end{align}
From \eqref{S3-proof-start} we have
\begin{align}\label{S3-proof-mid}
&q^{-c_{\lambda,k}}T_2(q)  \\
&=\frac{2}{1-q}q^{\frac{1}{4}(k+2\lambda)}\sum_{s_1\geq s_2\geq \cdots \geq s_{k-2}\geq 0} \frac{q^{s_1^2+\cdots +s_{k-2}^2+s_1+s_2+\cdots +s_{k-\lambda-1}+2s_{k-\lambda}+\cdots +2s_{k-2}}}{(q;q)_{s_1-s_2}\cdots (q;q)_{s_{k-3}-s_{k-2}}} \beta_{s_{k-2}}^{(3)}(q^2;q)\nonumber \\
&=\cdots \nonumber \\
&=\frac{2}{1-q}q^{\frac{1}{4}(k+2\lambda)}\sum_{s_1\geq s_2\geq \cdots \geq s_{k-\lambda-1}\geq 0} \frac{q^{s_1^2+\cdots +s_{k-\lambda-1}^2+s_1+s_2+\cdots +s_{k-\lambda-1}}}{(q;q)_{s_1-s_2}\cdots (q;q)_{s_{k-\lambda-2}-s_{k-\lambda-1}}}\beta_{s_{k-\lambda-1}}^{(\lambda+2)}(q^2;q). \nonumber
\end{align}
Next we apply \eqref{eq-BP-reduce} to the Bailey pair $(\alpha_n^{(\lambda+2)},\beta_n^{(\lambda+2)})$ to obtain the following Bailey pair relative to $q$:
\begin{equation}\label{S3-alpha-mid}
\begin{split}
\alpha_n^{(\lambda+3)}(q;q)&=(n+1)q^{(\lambda+1)n^2+(2\lambda+1)n}-nq^{(\lambda+1)(n-1)^2+(2\lambda+1)(n-1)+2n}, \\
\beta_n^{(\lambda+3)}(q;q)&=\beta_n^{(\lambda+2)}(q^2;q).
\end{split}
\end{equation}
Iterating this Bailey pair using \eqref{BP-S1} $i$ times, we obtain the Bailey pairs $(\alpha_n^{(\lambda+3+i)}(q;q),$ $\beta_n^{(\lambda+3+i)}(q;q))$  with
\begin{align}
\alpha_n^{(\lambda+3+i)}(q;q)=q^{i(n^2+n)}\alpha_n^{(\lambda+3)}(q;q), \quad i=1,2,\dots,k-\lambda-2.
\end{align}
Substituting these Bailey pairs into \eqref{S3-proof-mid}, we deduce that
\begin{align}
&q^{-c_{\lambda,k}}T_2(q)\nonumber\\
&=\frac{2}{1-q}q^{\frac{1}{4}(k+2\lambda)}\sum_{s_1\geq s_2\geq \cdots \geq s_{k-\lambda-1}\geq 0} \frac{q^{s_1^2+\cdots +s_{k-\lambda-1}^2+s_1+s_2+\cdots +s_{k-\lambda-1}}}{(q;q)_{s_1-s_2}\cdots (q;q)_{s_{k-\lambda-2}-s_{k-\lambda-1}}}\beta_{s_{k-\lambda-1}}^{(\lambda+3)}(q;q)\nonumber \\
&=\frac{2}{1-q}q^{\frac{1}{4}(k+2\lambda)}\sum_{s_1\geq s_2\geq \cdots \geq s_{k-\lambda-2}\geq 0} \frac{q^{s_1^2+\cdots +s_{k-\lambda-2}^2+s_1+s_2+\cdots +s_{k-\lambda-2}}}{(q;q)_{s_1-s_2}\cdots (q;q)_{s_{k-\lambda-3}-s_{k-\lambda-2}}}\beta_{s_{k-\lambda-2}}^{(\lambda+4)}(q;q)\nonumber \\
&=\cdots \nonumber \\
&=\frac{2}{1-q}q^{\frac{1}{4}(k+2\lambda)}\sum_{s_1\geq 0} q^{s_1^2+s_1}\beta_{s_1}^{(k+1)}(q;q) \nonumber \\
&=\frac{2}{(q;q)_\infty}q^{\frac{1}{4}(k+2\lambda)} \sum_{n\geq 0} q^{n^2+n}\alpha_n^{(k+1)}(q;q) \quad \text{(by \eqref{eq-BP-id-key})} \nonumber \\
&=\frac{2}{(q;q)_\infty}q^{\frac{1}{4}(k+2\lambda)} \Big(\sum_{n=0}^\infty (n+1)q^{k(n^2+n)+\lambda n}-\sum_{n=0}^\infty nq^{kn^2+(k-\lambda)n-\lambda}  \Big)  \nonumber \\
&=\frac{2}{(q;q)_\infty}q^{\frac{1}{4}(k+2\lambda)}\sum_{n=-\infty}^\infty nq^{kn^2-(k-\lambda)n-\lambda}.
\end{align}
Here for the penultimate equality we used  \eqref{S3-alpha-pre} and \eqref{S3-alpha-mid},  and for the last equality we replaced $n$ by $n-1$ (resp.\ $-n$) in the first (resp.\ second) sum in the penultimate line.  This proves \eqref{id-4}.
\end{proof}

\section{Concluding remarks}\label{sec-remark}
We have proved in Theorem \ref{thm-main} that the Nahm sum $f_{2\mathcal{C}(D_k)^{-1},0,-1/24}(q)$ is modular. Inspired by Zagier's duality expectation (see \eqref{eq-dual}), we propose the following
\begin{conj}
The Nahm sum $f_{\frac{1}{2}\mathcal{C}(D_k),0,(1-k)/24}(q)$ is modular for any $k\geq 3$.
\end{conj}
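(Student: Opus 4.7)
Since $f_{\frac12\mathcal{C}(D_k),0,(1-k)/24}(q)$ is the Zagier dual (see \eqref{eq-dual}) of $f_{2\mathcal{C}(D_k)^{-1},0,-1/24}(q)$, whose modularity is the $\lambda=0$ case of Theorem~\ref{thm-main}, the plan is to establish an explicit product/theta formula for this Nahm sum by adapting the Bailey-pair machinery of Section~\ref{sec-proof}. Writing
\begin{align*}
f_{\frac12\mathcal{C}(D_k),0,(1-k)/24}(q)=q^{(1-k)/24}\sum_{n\in\mathbb{N}^k}\frac{q^{\frac14 n^{\mathrm{T}}\mathcal{C}(D_k)n}}{(q;q)_{n_1}\cdots(q;q)_{n_k}},
\end{align*}
the key feature is that the Cartan matrix $\mathcal{C}(D_k)$ encodes a tree: two of the three leaves of the $D_k$ Dynkin diagram are attached to the branching node, and the associated summation variables couple only to that node (not to each other). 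This symmetry at the fork is closely analogous to the situation exploited in the first proof of Theorem~\ref{thm-gen-1} and invites the same parity-splitting strategy.

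Concretely, I would first sum over the two leaf variables using the classical identity $\sum_{n\ge 0}q^{n^2/2}z^n/(q;q)_n=(-q^{1/2}z;q)_\infty$ specialized to $z=q^{-m/2}$, where $m$ denotes the value of the branching-node variable; this produces factors $(-q^{(1-m)/2};q)_\infty$ whose explicit shape depends on the parity of $m$. After splitting by this parity and extracting half-integer $q$-powers via $(-q^{1/2-\ell};q)_\infty=q^{-\ell^2/2}(-q^{1/2};q)_\ell(-q^{1/2};q)_\infty$, the rank-$k$ sum should reduce to a rank-$(k-2)$ sum along the surviving chain of the Dynkin diagram, with a Bailey pair emerging in the index $m$. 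I would then iterate Bailey's lemma \eqref{BP-S1} along this chain in the spirit of \eqref{proof-G1-start}--\eqref{proof-S0}, invoking Lemma~\ref{lem-BP-change} whenever the Bailey parameter $a$ needs to shift. The hoped-for target is a theta/eta quotient related to the level-one characters of affine $D_k$, from which modularity would follow immediately.

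The main obstacle, as I see it, is that the Bailey pair produced after summing out the two leaves is not of the clean form $\alpha_n=q^{cn^2}$ used throughout Section~\ref{sec-proof}; rather, it combines $q$-Pochhammer factors $(-q^{1/2};q)_\ell$ with quadratic exponentials in $\ell$, so the iteration step \eqref{BP-S1} does not self-reproduce a Bailey pair of the same shape at successive stages. Overcoming this will likely require either a new Bailey-pair identity adapted to the branching, or a triangularizing change of variables analogous to \eqref{eq-variable} that converts the $\frac12\mathcal{C}(D_k)$ quadratic form into a sum of squares amenable to iterated Bailey reduction.

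As a complementary route, one could try to apply an $S$-transformation to the unary theta sides of \eqref{id-1}--\eqref{id-2} and match its image with the fermionic series on the left of the conjecture; this is automatic \emph{if} Zagier's duality holds in the present setting, but establishing the duality is essentially the content of the conjecture and is in general a subtle modular-forms computation.
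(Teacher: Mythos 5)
First, a point of order: the statement you are proving is presented in the paper as an \emph{open conjecture}. The authors give no proof for general $k$; the only settled cases are $k=3$ and $k=4$, established in \cite{Cao-Wang2024,Cao-Wang2025} by explicit eta-quotient evaluations specific to those ranks. There is therefore no proof in the paper to compare against, and the only question is whether your argument closes the gap. It does not.

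Your proposal is a strategy outline, and you diagnose its fatal point yourself. Summing the two leaf variables by Euler's identity does give $(-q^{(1-m)/2};q)_\infty$, and your rewriting $(-q^{1/2-\ell};q)_\infty=q^{-\ell^2/2}(-q^{1/2};q)_\ell(-q^{1/2};q)_\infty$ for $m=2\ell$ is correct; but the resulting $\beta$-sequence in the branching-node index carries the finite product $(-q^{1/2};q)_\ell$, whereas every Bailey pair used in Section \ref{sec-proof} has $\alpha_n$ equal to a pure power of $q$ times a polynomial coefficient. The iteration \eqref{BP-S1} does not reproduce pairs of the Pochhammer-laden shape, and Lemma \ref{lem-BP-change} only shifts the parameter by one power of $q$, so the chain reduction never closes; this is precisely the obstruction you name, and nothing in the proposal overcomes it. The alternative you suggest, a triangularizing substitution analogous to \eqref{eq-variable}, also fails structurally: that substitution works for $2G_{k-2}=\mathcal{C}(A_1)\otimes\mathcal{C}(T_{k-2})^{-1}$ because the quadratic form is literally a sum of squares of the partial sums $s_i$, while $\frac14 n^{\mathrm{T}}\mathcal{C}(D_k)n$ is (a multiple of) the Cartan form itself, not its inverse, and completes to squares only with non-integer coefficients, so no analogue of \eqref{proof-G1-start}--\eqref{proof-S0} is available. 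Finally, as you concede, the $S$-transformation/duality route assumes the conclusion: Zagier's duality \eqref{eq-dual} is a heuristic expectation, not a theorem, so the modularity of $f_{2\mathcal{C}(D_k)^{-1},0,-1/24}(q)$ from Theorem \ref{thm-main} does not transfer to its dual. In short, no step of the proposal establishes modularity for any $k\geq 5$, and the statement remains a conjecture.
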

This was proved by Cao and Wang  for $k=3$ and $k=4$ in \cite[Theorem 6.3]{Cao-Wang2024} and \cite[Theorem 3.1]{Cao-Wang2025}, respectively. For any positive integer $m$ and $1\leq a<m$ we denote
\begin{align}
J_m:=(q^m;q^m)_\infty, \quad J_{a,m}=(q^a,q^{m-a},q^m;q^m)_\infty.
\end{align}
The modular representations for the corresponding Nahm sums are
\begin{align}
&f_{\frac{1}{2}\mathcal{C}(D_3),0,-1/12}(q^2)=q^{-\frac{1}{6}}\sum_{n_1,n_2,n_3\geq 0} \frac{q^{n_1^2+n_2^2+n_3^2-n_1n_2-n_1n_3}}{(q^2;q^2)_{n_1}(q^2;q^2)_{n_2}(q^2;q^2)_{n_3}}\nonumber \\
&=q^{-\frac{1}{6}}\Big(\frac{J_2^3J_6^5}{J_1^2J_3^2J_4^2J_{12}^2}+4q\frac{J_4^2J_{12}^2}{J_2^3J_6}\Big), \\
&f_{\frac{1}{2}\mathcal{C}(D_4),0,-1/8}(q^2)=q^{-\frac{1}{4}}\sum_{n_1,n_2,n_3,n_4\geq 0} \frac{q^{n_1^2+n_2^2+n_3^2+n_4^2-n_1n_4-n_2n_4-n_3n_4}}{(q^2;q^2)_{n_1}(q^2;q^2)_{n_2}(q^2;q^2)_{n_3}(q^2;q^2)_{n_4}} \nonumber \\
&=q^{-\frac{1}{4}}\Big(\frac{J_2^5J_4}{J_1^4J_8^2}+8q\frac{J_4^3J_8^2}{J_2^5}\Big).
\end{align}
Some companion modular Nahm sums associated with the same matrices but different vectors have also been found there.

For $k\geq 3$, it is natural to ask whether $f_{\mathcal{C}(D_k),0,C}(q)$  and its dual Nahm sum $f_{\mathcal{C}(D_k)^{-1},0,-k/24-C}(q)$ are modular for a suitable scalar $C$ or not. In the case $k=3$, we have $D_3=A_3$, and the modularity has been confirmed by Cao and Wang \cite[Theorem 4.1]{Cao-Wang2024} by the identities
\begin{align}
    &f_{\mathcal{C}(D_3),0,-1/14}(q^2)=q^{-\frac{1}{7}}\sum_{n_1,n_2,n_3\geq 0} \frac{q^{2n_1^2+2n_2^2+2n_3^2-2n_1n_2-2n_1n_3}}{(q^2;q^2)_{n_1}(q^2;q^2)_{n_2}(q^2;q^2)_{n_3}} \nonumber  \\
&=q^{-\frac{1}{7}}\Big(\frac{1}{2}\frac{J_{2}^{11}J_{1,14}J_{12,28}}{J_1^{6}J_{4}^{6}J_{28}}
+\frac{1}{2}\frac{J_{1}^{5}J_{7}J_{3,14}J_{5,14}}{J_{2}^{5}J_{4,14}J_{6,14}J_{14}}
-4q^2 \frac{J_4^{6}J_{28}^{3}}{J_2^{6}J_{4,28}J_{10,28}J_{12,28}}\Big), \label{D3-id} \\
&f_{\mathcal{C}(D_3)^{-1},0,-3/56}(q^8)=q^{-\frac{3}{7}}\sum_{n_1,n_2,n_3\geq 0} \frac{q^{4n_1^2+3n_2^2+3n_3^2+4n_1n_2+4n_1n_3+2n_2n_3}}{(q^8;q^8)_{n_1}(q^8;q^8)_{n_2}(q^8;q^8)_{n_3}}\nonumber \\
&=q^{-\frac{3}{7}}\Big(\frac{J_8J_{56}^2J_{112}J_{24,112}J_{40,112}}{J_4J_{12,112}J_{28,112}J_{32,112}J_{44,112}J_{48,112}}+2q^3\frac{J_{16}J_{32,112}}{J_8^2}\Big).  \label{D3-inverse-id}
\end{align}
Here the identity \eqref{D3-id} is \cite[Eq.~(4.5)]{Cao-Wang2024}. Regarding the second identity, recall that Cao and Wang \cite[Eqs.~(4.2) and (4.3)]{Cao-Wang2024} observed that $\mathcal{C}(D_3)$ can be lifted from the matrix $A=\left(\begin{smallmatrix}
    1 & -1/2 \\ -1/2 & 3/4
\end{smallmatrix}\right)$ in Zagier's ninth rank two example \cite[Table 2]{Zagier}. Hence,
\begin{align}
f_{\mathcal{C}(D_3)^{-1},0,0}(q^8)=f_{A,0,0}(q^8).
\end{align}
Therefore, we can use the product form  \cite[Eq.~(3.123)]{Wang-rank2} of the rank two Nahm sum on the  right side to obtain \eqref{D3-inverse-id}. However, the case $k\geq 4$ remains open.

\subsection*{Acknowledgements}
This work was supported by the National Key R\&D Program of China (Grant No.\ 2024YFA1014500). The authors thank Ole Warnaar for bringing \cite{Warnaar} to our attention and some helpful comments.

\end{document}